\numberwithin{equation}{section}
\theoremstyle{definition}
\newtheorem{dfn}{Definition}[section]
\newtheorem{example}[dfn]{Example}
\newtheorem{rem}[dfn]{Remark}
\theoremstyle{plain}
\newtheorem{thm}[dfn]{Theorem}
\newtheorem{cor}[dfn]{Corollary}
\newtheorem{lem}[dfn]{Lemma}
\title{De Concini-Kac filtration and Gelfand-Tsetlin generators for quantum $\mathfrak{gl}_N$}
\author{Vyacheslav Futorny}
\author{Jonas T. Hartwig }
\address{Department of Mathematics,
 University of S\~ao Paulo,
 S\~ao Paulo, Brazil and Max Planck Institute for Mathematics, Bonn, Germany}
 \email{futorny@ime.usp.br}
\address{Department of Mathematics, Iowa State University, Ames, IA-50011, USA}
\email{jth@iastate.edu}
\newcommand\al{\alpha}\newcommand\be{\beta} 
\newcommand\ep{\varepsilon}
 \newcommand\la{\lambda}\newcommand\La{\Lambda}
  \newcommand\si{\sigma}
\newcommand{\iv}[2]{\llbracket #1,#2 \rrbracket}  
\newcommand\mf{\mathfrak}
\newcommand\C{\mathbb{C}}    \newcommand\Z{\mathbb{Z}}
\newcommand{\gl}{\mathfrak{gl}}
\newcommand{\lt}{\mathrm{lt}}
\newcommand{\height}{\mathrm{ht}}
\DeclareMathOperator{\gr}{gr}
  \DeclareMathOperator{\diag}{diag}
\DeclareMathOperator{\Frac}{Frac}
\begin{document}

\begin{abstract}
In this note we compute the leading term with respect to the De Concini-Kac filtration of $U_q(\mathfrak{gl}_n)$ of a generating set for the quantum Gelfand-Tsetlin subalgebra.
\end{abstract}

\maketitle

\section{Introduction}

An important class of associative algebras, called {\em Galois rings}
was introduced in \cite{FO1}. This class of algebras
includes for example Generalized Weyl algebras over integral
domains with infinite order automorphisms (in particular, the $n$-th Weyl
algebra, the quantum plane, $q$-deformed Heisenberg algebra,
quantized Weyl algebras, Witten-Wo\-ro\-no\-wicz algebra
\cite{ba}, \cite{bavo}; the universal enveloping algebra of
$\gl_N$ over the Gelfand-Tsetlin subalgebra \cite{dfo:gz},
\cite{dfo:hc}, associated shifted Yangians and finite
$W$-algebras \cite{FMO}, \cite{FMO1}.

These algebras contain a special commutative subalgebra $\Gamma$ which
allows one to embed the algebra into a certain invariant subalgebra
of some skew group algebra. In particular, such an embedding enables
the computation of the skew field of fractions \cite{FMO},\cite{FH}.

A natural choice of a commutative subalgebra in many associative
algebras is a so-called Gelfand-Tsetlin subalgebra. Classical
Gelfand-Tsetlin subalgebras of the universal enveloping algebras
of a simple Lie algebras were considered in
 \cite{FM}, \cite{Vi}, \cite{KW-1}, \cite{KW-2}, \cite{G1}, \cite{G2}
among the others.

In this paper we study the quantized enveloping algebra $U_q(\mathfrak{gl}_N)$.
This algebra contains a quantum analog of the Gelfand-Tsetlin
subalgebra of $U(\mathfrak{gl}_N)$, which we denote by $\Gamma_q$.
Based on the properties of so called generic Gelfand-Tsetlin modules constructed
in \cite{MT}, it was shown in \cite{FH} that $U_q(\mathfrak{gl}_N)$ is a Galois ring with
respect to $\Gamma_q$. This allowed us to prove the quantum Gelfand-Kirillov conjecture for $U_q(\mathfrak{gl}_N)$ \cite{FH},\cite{F}. Unlike all the examples listed above, $U_q(\mathfrak{gl}_N)$ is a Galois rings
with respect to a subalgebra which not a polynomial algebra.

Our main result is the calculation of the leading terms of a set of generators $d_{rs}$ for the quantum Gelfand-Tsetlin subalgebra.

\begin{thm}\label{thm:leadingTerm-intro}
The leading term of $d_{rs}$ (see \eqref{eq:drs}), with respect to the De Concini-Kac filtration using \eqref{eq:w0decomp} as decomposition of the longest Weyl group element, is 
\begin{equation}\label{eq:ltdrs-intro}
\lt(d_{rs}) = \la\cdot
 t_{1+s,1}^{(0)}t_{2+s,2}^{(0)}\cdots t_{r,r-s}^{(0)}\cdot t_{1,r-s+1}^{(1)}t_{2,r-s+2}^{(1)}\cdots t_{s,r}^{(1)}
\end{equation}
for some nonzero $\la\in\C$. 
\end{thm}

\subsection*{Notation}
$\iv{a}{b}$ denotes the set $\{x\in \Z\mid a\le x\le b\}$. The cardinality of a set $S$ is denoted $\# S$.
Throughout this paper, the ground field is $\C$ and $q\in\C$ is nonzero and
not a root of unity. We put $\C^\times=\C\setminus\{0\}$.

\subsection*{Acknowledgment}
The authors are grateful to A.Molev for helpful comments. The
first author is grateful  to  Max Planck Institute in Bonn for
support and hospitality during his visit. The first author is
supported in part by the CNPq grant (301743/2007-0) and by the
Fapesp grant (2010/50347-9).

\section{The algebra \texorpdfstring{$U_q(\mathfrak{gl}_N)$}{Uq(glN)}}
In this section we recall some facts about the quantized enveloping algebra
$U_q(\mathfrak{gl}_N)$ which will be used.

\subsection{Definition}
For positive integers $N$ we let $U_N=U_q(\mathfrak{gl}_N)$ denote the
unital associative $\C$-algebra with generators $E_i^\pm$, $K_j,
K_j^{-1}$, $i\in\iv{1}{N-1}$, $j\in\iv{1}{N}$ and relations
\cite[p.163]{KS}
\begin{gather*}
K_iK_i^{-1}=K_i^{-1}K_i=1, \quad [K_i,K_j]=0,\quad\forall i,j\in\iv{1}{N},\\
\begin{aligned}
K_iE_j^\pm K_i^{-1} &= q^{\pm(\delta_{ij}-\delta_{i,j+1})}E_j^{\pm}, \quad\forall i\in\iv{1}{N}, \forall j\in\iv{1}{N-1},\\
[E_i^+,E_j^-]&=\delta_{ij}\frac{K_iK_{i+1}^{-1}-K_{i+1}K_i^{-1}}{q-q^{-1}}, \quad\forall i,j\in\iv{1}{N-1},\\
[E_i^{\pm},E_j^{\pm}]&=0,\quad |i-j|>1,
\end{aligned}\\
(E_i^\pm)^2E_j^\pm -(q+q^{-1})E_i^\pm E_j^\pm E_i^\pm + E_j^\pm (E_i^\pm)^2 =0, \quad |i-j|=1.
\end{gather*}

\subsection{De Concini-Kac filtration} \label{sec:filt}
\cite[Section I.6.11]{BG}
Let $\al_i=\ep_i-\ep_{i+1}$, $i\in\iv{1}{N-1}$ be the standard simple roots of $\mathfrak{gl}_N$ where $\ep_i(\diag(a_1,\ldots,a_N))=a_i$.
Fix the following decomposition of the longest Weyl group element:
\begin{equation}\label{eq:w0decomp}
w_0=s_{i_1}\cdots s_{i_M} = (s_1s_2\cdots s_{N-1})(s_1s_2\cdots s_{N-2})
\cdots (s_1s_2)s_1,
\end{equation}
where $s_i=(i\; i+1)\in S_N$, and $M=N(N-1)/2$.
Let $\{\be_j=s_{i_1}\cdots s_{i_{j-1}}(\al_{i_j})\}_{j=1}^M$ be the corresponding enumeration of positive roots of $\mathfrak{gl}_N$.
One checks that
\begin{equation}\label{eq:posRoots}
(\be_1,\be_2,\ldots,\be_M)=(\be_{12},\be_{13},\ldots,\be_{1N},\, \be_{23},\be_{24},\ldots,\be_{2N},\,\ldots ,\, \be_{N-1,N}),
\end{equation}
where $\be_{ij}=\ep_i-\ep_j$ for all $i,j\in\iv{1}{N}$, $i<j$.
Let $E_{\be_i}$, $F_{\be_i} \in U_q(\mathfrak{gl}_N)$ be the corresponding positive and negative root vectors (see e.g. \cite[Section I.6.8]{BG}).
The following PBW theorem for $U_q(\mathfrak{gl}_N)$ is well-known:
\begin{thm}
The set of ordered monomials
\begin{equation}\label{eq:PBW}
F^rK_\lambda E^k := F_{\be_1}^{r_1}\cdots F_{\be_M}^{r_M} \cdot K_1^{\la_1}\cdots K_N^{\la_N}\cdot E_{\be_1}^{k_1}\cdots E_{\be_M}^{k_M}
\end{equation}
where $r,k\in\Z_{\ge 0}^M$ and $\la\in \Z^N$,
form a basis for $U_q(\mathfrak{gl}_N)$.
\end{thm}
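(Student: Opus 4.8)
The plan is to reduce the theorem to a triangular factorization together with a PBW statement for each of the three factors. Write $U^0=\C[K_1^{\pm1},\ldots,K_N^{\pm1}]$ for the subalgebra generated by the $K_j^{\pm1}$, and $U^{\pm}$ for the subalgebras generated by $E_1^{\pm},\ldots,E_{N-1}^{\pm}$. Using the relations $K_iE_j^\pm K_i^{-1}=q^{\pm(\delta_{ij}-\delta_{i,j+1})}E_j^\pm$ one moves every Cartan generator into the middle of any word, and using $[E_i^+,E_j^-]=\delta_{ij}\tfrac{K_iK_{i+1}^{-1}-K_{i+1}K_i^{-1}}{q-q^{-1}}$ one moves every negative Chevalley generator to the left of every positive one, each application strictly reducing a suitable complexity measure of the word; an induction then shows that $U_q(\gl_N)$ is spanned by products $F\,K_\la\,E$ with $F\in U^-$, $E\in U^+$ and $\la\in\Z^N$. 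It therefore suffices to prove that the Laurent monomials $K_\la$ are a basis of $U^0$ (immediate from the defining relations), that the ordered monomials $E_{\be_1}^{k_1}\cdots E_{\be_M}^{k_M}$ are a basis of $U^+$, that the ordered monomials $F_{\be_1}^{r_1}\cdots F_{\be_M}^{r_M}$ are a basis of $U^-$, and finally that the multiplication map $U^-\otimes U^0\otimes U^+\to U_q(\gl_N)$ is injective.

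For the PBW statement in $U^+$ --- the case of $U^-$ being symmetric under the interchange $E_i^+\leftrightarrow E_i^-$ --- I would use that the root vectors are built from the Chevalley generators via Lusztig's braid operators $T_i$ along the reduced word \eqref{eq:w0decomp}, namely $E_{\be_j}=T_{i_1}\cdots T_{i_{j-1}}(E^+_{i_j})$, and that the induced order \eqref{eq:posRoots} on positive roots is convex. Convexity yields Levendorskii--Soibelman straightening relations: for $i<j$ the element $E_{\be_j}E_{\be_i}-q^{-(\be_i,\be_j)}E_{\be_i}E_{\be_j}$ is a $\C$-linear combination of ordered monomials in the intermediate root vectors $E_{\be_{i+1}},\ldots,E_{\be_{j-1}}$ alone. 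From these relations, ordered monomials span $U^+$ by an induction on a suitable well-ordering of the exponent tuples. For linear independence I would pass to the associated graded algebra for the filtration by total degree in the root vectors: the straightening relations force $\gr U^+$ to be a homomorphic image of the quantum affine space $A=\C_{\mathbf q}[x_1,\ldots,x_M]$ with $x_jx_i=q^{-(\be_i,\be_j)}x_ix_j$ for $i<j$, and since $A$ is an iterated Ore extension of $\C$ it is a noetherian domain of Gelfand--Kirillov dimension $M$ with the monomials $x^{\mathbf k}$ as basis; a growth comparison then shows $\gr U^+\cong A$, and lifting this monomial basis back through the filtration gives the claimed basis of $U^+$. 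The independence of the $K_\la$ in $U^0$ and the injectivity of the multiplication map can be obtained from the triangular decomposition by the same graded considerations, or by exhibiting Verma-type $U_q(\gl_N)$-modules on which the ordered monomials act by linearly independent operators.

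I expect the technical heart, hence the main obstacle, to be the precise derivation of the Levendorskii--Soibelman relations: one must check not merely that $E_{\be_j}E_{\be_i}$ reorders, but that the correction terms involve only ordered monomials in the roots strictly between $\be_i$ and $\be_j$, with no reappearance of $E_{\be_i}$ or $E_{\be_j}$. In type $A$ this reduces to explicit identities inside the rank-two subalgebras generated by two adjacent simple root vectors, so it is tractable, but this is where all the real computation lives; I would present it by reduction to the $\gl_3$ case and cite the standard treatment (e.g.\ via \cite{BG} and references therein). A useful shortcut to record is that $U_q(\gl_N)$ differs from $U_q(\mf{sl}_N)$ only in its Cartan part: the assignment $E_i\mapsto E_i^+$, $F_i\mapsto E_i^-$, $K_{\al_i}\mapsto K_iK_{i+1}^{-1}$ identifies $U_q(\mf{sl}_N)$ with a subalgebra of $U_q(\gl_N)$ containing all the root vectors, and $U_q(\gl_N)$ is a faithfully flat base change of $U_q(\mf{sl}_N)$ along the torus inclusion $\C[K_{\al_1}^{\pm1},\ldots,K_{\al_{N-1}}^{\pm1}]\hookrightarrow\C[K_1^{\pm1},\ldots,K_N^{\pm1}]$, so the theorem also follows directly from the classical PBW theorem for $U_q(\mf{sl}_N)$, with only this compatibility left to verify.
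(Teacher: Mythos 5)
The paper does not prove this statement at all: it is quoted as the well-known PBW theorem for $U_q(\mathfrak{gl}_N)$ (with the root vectors taken from \cite[Section I.6.8]{BG}), and the only proof-like content nearby is the citation of the Levendorski\u{\i}--Soibelman straightening rule \cite{LS} and \cite{DK} for the subsequent De Concini--Kac filtration theorem. Your outline is the standard textbook argument (triangular decomposition, braid-group root vectors along the reduced word \eqref{eq:w0decomp}, convexity, LS straightening, passage to the associated graded quantum affine space), and you correctly identify the rank-two LS computations as the technical heart; the closing reduction to $U_q(\mathfrak{sl}_N)$ by base change along the torus is also a legitimate shortcut. So the comparison is simply: you supply a proof sketch where the paper supplies a reference.

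One step of your sketch is circular as written. The ``growth comparison'' showing $\gr U^+\cong A$ requires a \emph{lower} bound on the growth of $U^+$ (equivalently, on the dimensions of its weight spaces), and that lower bound is exactly the linear independence you are trying to establish; knowing only that $\gr U^+$ is a quotient of the quantum affine space $A$ gives an upper bound. You need an external input here: specialization at $q=1$ and comparison with $U(\mathfrak{n}^+)$, the nondegenerate Drinfeld pairing between $U^+$ and $U^-$, or an explicit faithful module on which the ordered monomials act independently. You do invoke Verma-type modules, but only for the independence of the $K_\la$ and the injectivity of the triangular multiplication map, not for $U^+$ itself; extending that device to $U^+$ (or citing the specialization argument) would close the gap. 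With that supplement the proposal is a correct, if necessarily computation-heavy, proof.
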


Define the \emph{total degree} of a monomial $F^rK_\lambda E^k$ to be
\begin{equation}\label{eq:totalDegree}
d(F^rK_\lambda E^k)= \big(k_M,\ldots,k_1,r_1,\ldots, r_M, \height(F^rK_\lambda E^k)\big)\in \Z_{\ge 0}^{2M+1},
\end{equation}
where
\begin{equation}\label{eq:heightMonDef}
\height(F^rK_\lambda E^k) = \sum_{j=1}^M (k_j+r_j)\height(\be_j)
\end{equation}
and $\height(\be)=\sum_{i=1}^{N-1} a_i$ if $\be=\sum_{i=1}^{N-1} a_i\alpha_i$.
Equip the monoid $\Z_{\ge 0}^{2M+1}$ with the lexicographical order
uniquely determined by the inequalities
\[u_1<u_2<\cdots <u_M\]
where $u_i=(0,\ldots,0,1,0,\ldots,0)$ with $1$ on the $i$:th position.

\begin{thm}[De Concini-Kac]
\label{thm:DeConciniKac}
The total degree function $d$ defined above
equips $U=U_q(\mathfrak{gl}_N)$ with a $\Z_{\ge 0}^{2M+1}$-filtration $\{U_{(k)}\}_{k\in\Z_{\ge 0}^{2M+1}}$. The
 associated graded algebra $\gr U$ is the $\C$-algebra on the generators
\[ \bar E_{\be_i}, \bar F_{\be_j}, \bar K_\al\]
$i=1,\ldots,M$, $\al\in\Z^N$ subject to the following defining relations:
\begin{equation}
\begin{aligned}
\bar K_\al \bar K_\be &= \bar K_{\al+\be} &
\bar K_0 &= 1\\
\bar K_\al \bar E_{\be_i} &= q^{(\al,\be_i)} \bar E_{\be_i}\bar K_\al  &
\bar K_\al \bar F_{\be_i} &= q^{-(\al,\be_i)}\bar F_{\be_i}\bar K_\al \\
\bar E_{\be_i} \bar F_{\be_j} &= \bar F_{\be_j} \bar E_{\be_i} &&\\
\bar E_{\be_i} \bar E_{\be_j} &= q^{(\be_i,\be_j)}\bar E_{\be_j}\bar E_{\be_i} &
\bar F_{\be_i} \bar F_{\be_j} &= q^{(\be_i,\be_j)}\bar F_{\be_j}\bar F_{\be i} \\
\end{aligned}
\end{equation}
for $\al,\be\in \Z^N$ and $1\le i,j\le M$.
\end{thm}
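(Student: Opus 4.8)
The plan is to establish the filtration claim first and then identify the associated graded algebra by comparing generators and relations.

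First I would verify that $d$ indeed defines an algebra filtration, i.e.\ that $U_{(u)}\cdot U_{(v)}\subseteq U_{(u+v)}$, where $U_{(u)}=\Span_\C\{F^rK_\la E^k : d(F^rK_\la E^k)\le u\}$. By bilinearity it suffices to check that the product of two PBW monomials, when re-expanded in the PBW basis \eqref{eq:PBW}, involves only monomials of total degree at most the sum of the two total degrees. The key input is the set of commutation relations among the root vectors $E_{\be_i}$, $F_{\be_j}$, $K_\la$ in $U_q(\mathfrak{gl}_N)$ relative to the chosen convex order \eqref{eq:posRoots}: for $i<j$ one has Levendorskii--Soibelman type straightening rules of the form $E_{\be_j}E_{\be_i} = q^{(\be_i,\be_j)} E_{\be_i}E_{\be_j} + (\text{lower terms})$, where the lower terms are linear combinations of monomials $E_{\be_{i+1}}^{c_{i+1}}\cdots E_{\be_{j-1}}^{c_{j-1}}$ supported strictly between $\be_i$ and $\be_j$, and crucially each such term has the same height $\height(\be_i)+\height(\be_j)$ and strictly smaller $E$-part in the lexicographic order on $\Z_{\ge0}^M$ (since it trades a degree-$1$ contribution in position $j$ and position $i$ for contributions strictly between them, and the ordering $u_1<\dots<u_M$ is arranged so these are smaller). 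Analogous rules hold for the $F$'s, for $E$-$F$ pairs (using $[E_i^+,E_j^-]\in\C[K^{\pm1}]$, which lowers both $E$- and $F$-degree and is height-$0$), and the $K$-$E$, $K$-$F$ relations are exact scalar commutations. Tracking these through the reordering of an arbitrary product of two monomials gives the filtration property; the reversed order $(k_M,\ldots,k_1)$ in \eqref{eq:totalDegree} and the height as last coordinate are exactly what is needed to make each straightening step non-increasing.

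Next I would compute $\gr U$. The images $\bar E_{\be_i}=\operatorname{gr} E_{\be_i}$, $\bar F_{\be_j}$, $\bar K_\la$ generate $\gr U$ because the PBW monomials span $U$. The defining relations of $\gr U$ are read off from the straightening rules: in each Levendorskii--Soibelman relation the leading term $q^{(\be_i,\be_j)}E_{\be_i}E_{\be_j}$ has total degree equal to that of $E_{\be_j}E_{\be_i}$ while every correction term has strictly smaller total degree, so it dies in $\gr U$, yielding $\bar E_{\be_i}\bar E_{\be_j}=q^{(\be_i,\be_j)}\bar E_{\be_j}\bar E_{\be_i}$ and similarly for the $F$'s; the mixed relation $[E_i^+,E_j^-]=\delta_{ij}(K_iK_{i+1}^{-1}-K_{i+1}K_i^{-1})/(q-q^{-1})$, expressed in root vectors, has both sides of total degree strictly below that of $E_{\be_i}F_{\be_j}$, so $\bar E_{\be_i}\bar F_{\be_j}=\bar F_{\be_j}\bar E_{\be_i}$; the $K$-relations are exact and pass to $\gr U$ unchanged, giving $\bar K_\al\bar K_\be=\bar K_{\al+\be}$, $\bar K_0=1$, and the $q^{\pm(\al,\be_i)}$-commutations. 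This shows $\gr U$ is a quotient of the abstract algebra $A$ presented by the listed relations. For the reverse inclusion, note that $A$ is spanned by ordered monomials $\bar F^r\bar K_\la\bar E^k$ (the relations suffice to reorder any word), and the surjection $A\twoheadrightarrow \gr U$ sends these spanning elements to the images of the PBW basis of $U$, which are linearly independent in $\gr U$ since distinct PBW monomials have distinct total degrees by construction of $d$ and the lexicographic order. Hence the surjection is an isomorphism.

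The main obstacle is the verification of the straightening estimates: one must confirm, for the specific convex order \eqref{eq:posRoots} attached to the reduced decomposition \eqref{eq:w0decomp}, that every correction term produced when commuting $E_{\be_j}$ past $E_{\be_i}$ (and the $F$ and mixed analogues) is strictly smaller in the order on $\Z_{\ge0}^{2M+1}$ — which requires knowing that the support of those correction terms lies strictly between $\be_i$ and $\be_j$ in the chosen enumeration and comparing the induced lexicographic weights, together with keeping the height coordinate non-increasing. This is where the precise choice of the reversed tuple $(k_M,\dots,k_1)$ and the appended height in \eqref{eq:totalDegree} is used, and it is the one place where a genuine (if standard) computation with the quantum Serre relations and Levendorskii--Soibelman formulas is unavoidable; everything else is bookkeeping. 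One can shortcut part of this by invoking the De Concini--Kac construction for general $U_q(\mathfrak{g})$ in \cite[Section I.6.11]{BG} and merely specializing to type $A$ with the stated root order, but I would still spell out enough of the estimates to make the grading of $\gr U$ transparent.
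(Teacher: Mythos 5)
Your argument is correct and is essentially the paper's: the paper's entire proof consists of citing the Levendorski\u{\i}--Soibelman straightening rule \cite{LS} and De Concini--Kac \cite{DK} for exactly the details you spell out (correction terms supported strictly between $\be_i$ and $\be_j$, height preserved, exponent vector strictly decreased in the chosen lexicographic order). The one slip is your justification of linear independence in $\gr U$ --- distinct PBW monomials need \emph{not} have distinct total degrees, since $d$ ignores the $K_\la$ factor --- but the conclusion still holds because the PBW basis is adapted to the filtration, so its images form a basis of the associated graded algebra.
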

\begin{proof} That $d$ actually defines a filtration follows from the
commutation relation known as the \emph{Levendorski\u{\i}-Soibelman straightening rule} \cite[Proposition 5.5.2]{LS}. See \cite[Proposition 1.7]{DK} for details.
\end{proof}

Observe that the root vectors $E_\al, F_\al$, hence the De Concini-Kac filtration, depend on the choice of decomposition of the longest Weyl group element.

A simple but important corollary which will be used implicitly throughout is that
\begin{equation}
d(ab)=d(a)+d(b)=d(ba)
\end{equation}
for all $a,b\in U_q(\mf{gl}_N)$, where now $d(a)$ denotes the smallest $k\in\Z_{\ge 0}^{2M+1}$ such that $a\in U_{(k)}$.
This follows from the fact that the associated graded algebra is a domain.

\subsection{RTT presentation}
$U_q(\mathfrak{gl}_N)$ has an alternative presentation. It is isomorphic
to the algebra with generators $t_{ij}$, $\bar t_{ij}$, $i,j\in\iv{1}{N}$ and relations
\begin{subequations}\label{eq:RTT}
\begin{align}
\label{eq:RTTa}
t_{ij} &= 0 = \bar t_{ji},\quad \forall i<j,\\
\label{eq:RTTb}
t_{ii}\bar t_{ii} &= 1 = \bar t_{ii} t_{ii},\quad \forall i,\\
\label{eq:RTTc}
q^{\delta_{ij}} t_{ia}t_{jb} - q^{\delta_{ab}} t_{jb}t_{ia} &= (q-q^{-1})
(\delta_{b<a} - \delta_{i<j}) t_{ja} t_{ib} \\
\label{eq:RTTd}
q^{\delta_{ij}} \bar t_{ia} \bar t_{jb} - q^{\delta_{ab}}\bar t_{jb}\bar t_{ia} &= (q-q^{-1})
(\delta_{b<a} - \delta_{i<j}) \bar t_{ja} \bar t_{ib} \\
\label{eq:RTTe}
q^{\delta_{ij}}\bar t_{ia} t_{jb} - q^{\delta_{ab}} t_{jb} \bar t_{ia} &= (q-q^{-1})(\delta_{b<a}t_{ja}\bar t_{ib} - \delta_{i<j} \bar t_{ja} t_{ib} )
\end{align}
\end{subequations}
for all $i,a,j,b\in\iv{1}{N}$, where $\delta_S$ equals $1$ is $S$ is true and $0$ if $S$ is false.
An identification of the two sets of generators is given by \cite[Section 8.5.4]{KS}:
\begin{equation} \label{eq:identification}
\begin{aligned}
\bar t_{ii} &= K_i^{-1} & t_{ii} &= K_i \\
\bar t_{i,i+1} &= (q-q^{-1})K_i^{-1}E_i &
t_{i+1,i} &= -(q-q^{-1})F_iK_i \\
\bar t_{ij} &= (q-q^{-1})(-1)^{i-j+1}K_i^{-1} E_{\be_{ij}} &
t_{ji} &= -(q-q^{-1})F_{\be_{ij}} K_i
\end{aligned}
\end{equation}
for $j>i+1$,
where $E_{\be_{ij}}, F_{\be_{ij}}$ are the root vectors, defined in Section \ref{sec:filt}.

\subsection{Gelfand-Tsetlin subalgebra }
Let $U_q=U_q(\mf{gl}_N)$. It is immediate by the defining relations that,
for each $r\in\iv{1}{N}$, the subalgebra $U_q^{(r)}$ of $U_q$ generated by
$E_i,F_i,K_j$ for $i\in\iv{1}{r-1},\, j\in\iv{1}{r}$ (or equivalently,
by $t_{ij},\bar t_{ij}$ for $i,j\in\iv{1}{r}$) can be identified with
$U_q(\mathfrak{gl}_r)$.
Thus we have a chain of subalgebras
\[U_q^{(1)}\subset U_q^{(2)}\subset \cdots \subset U_q^{(N)}=U_q.\]
Let $Z_r$ denote the center of $U_q^{(r)}$.
The subalgebra of $U_q$ generated by $Z_1,\ldots,Z_N$ is called the
\emph{Gelfand-Tsetlin subalgebra} and will be denoted by $\Gamma_q$.
It is immediate that $\Gamma_q$ is commutative.

In \cite[Section 5]{HM} it is proved that $Z_r$
 is generated by the coefficients of the following polynomial in $U_q^{(r)}[u^{-1}]$:
\begin{equation}\label{eq:zNu}
z_r(u)=\sum_{\si\in S_r} (-q)^{-l(\si)} \prod_{j=1}^r \big(t_{\si(j)j}-\bar t_{\si(j)j}q^{2(j-1)}u^{-1}\big).
\end{equation}
It will be useful to rewrite this polynomial in a different way.
For this purpose it will be convenient to use the notation
\begin{equation}\label{eq:tijk}
t_{ij}^{(k)} = \begin{cases}t_{ij},& k=0, \\ \bar t_{ij},& k=1.\end{cases}
\end{equation}
A direct computation gives that
\begin{equation}
z_r(u)=
\sum_{s=0}^r (-1)^r
d_{rs}(q^2u)^{-s},
\end{equation}
where
\begin{equation}\label{eq:drs}
d_{rs} =
\sum_{\si\in S_r} (-q)^{-l(\si)}
\sum_{k\in\{0,1\}^r:\, \sum k_i=s}
q^{2(k_1+2k_2+\cdots+rk_r)} t_{\si(1)1}^{(k_1)}\cdots t_{\si(r)r}^{(k_r)}.
\end{equation}

Observe that $d_{r0} = d_{rr}^{-1}$. Therefore,
the (commuting) elements $d_{rs}$, $1\le s\le  r\le N$, generate $\Gamma_q$,
provided we allow taking negative powers of $d_{rr}$. We show that they are algebraically independent.

\begin{lem}\label{lem:Gammaiso}
\begin{equation}
\Gamma_q \simeq \C[d_{rs}\mid 1\le s\le r\le N][d_{rr}^{-1}\mid 1\le r\le N].
\end{equation}
\end{lem}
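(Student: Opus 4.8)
The plan is to transport the problem through the isomorphism $\varphi$ of Theorem~\ref{thm:realization}. By part~\eqref{it:GammaLambdaG} of that theorem, $\varphi(\Gamma_q)=\Gamma=\Lambda^G=\bigotimes_{m=1}^N \Lambda_m^{W_m}$, and by the last part $\varphi|_{Z_m}=\xi_m^{-1}\circ h_m$ identifies the image of $Z_m$ with $\Lambda_m^{W_m}$. Now $W_m$ is the Weyl group of type $D_m$ acting on $\Lambda_m=\C[X_{m1}^{\pm1},\ldots,X_{mm}^{\pm1}]$ by permutations and an even number of sign changes, so $\Lambda_m^{W_m}$ is a very concrete ring: it is generated by the elementary symmetric functions in the variables $Y_{mi}:=X_{mi}+X_{mi}^{-1}$ together with the product $P_m:=\prod_{i=1}^m X_{mi}$ (which is $W_m$-invariant since all sign changes come in pairs) and its inverse $P_m^{-1}$. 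A standard computation shows that $\Lambda_m^{W_m}\simeq \C[a_{m1},\ldots,a_{mm}][P_m^{\pm1}]$ where the $a_{ms}$ are the elementary symmetric polynomials in the $Y_{mi}$, and that $a_{m1},\ldots,a_{mm},P_m$ are algebraically independent (they realize $\Lambda_m^{W_m}$ as a Laurent-polynomial extension of a genuine polynomial ring in $m$ variables). Hence $\Gamma=\bigotimes_m \Lambda_m^{W_m}$ is a polynomial ring in the $\binom{N+1}{2}$ variables $a_{ms}$ ($1\le s\le m\le N$), localized at the $N$ elements $P_1,\ldots,P_N$.

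Next I would match the De Concini–Kac / RTT generators $d_{rs}$ against the $a_{ms}$ and $P_m$. From the defining formula \eqref{eq:drs} one sees that $d_{r0}$ is a scalar multiple of $\prod_{i=1}^r t_{ii}=\prod_{i=1}^r K_i$ (only the identity permutation and $k=0$ survive), so under $\varphi$, using $\varphi(K_i)=A_i^0 e$ and \eqref{eq:Am0_def}, $\varphi(d_{r0})$ is a unit times $\prod_{m=1}^r A_m^0=\prod_{m=1}^r q^m P_m P_{m-1}^{-1}$, which is a unit times $P_r$ (the telescoping collapses). Likewise $\varphi(d_{rs})$ lies in $\Lambda_1^{W_1}\otimes\cdots\otimes\Lambda_r^{W_r}$ and, when expanded, is $\pm P_r^{-1}$ (coming from the ``$\bar t$'' factors) times a polynomial whose top-degree-in-$\Lambda_r$ part is $\pm q^{\ast} e_s(Y_{r1},\ldots,Y_{rr})$ up to lower terms in the $Z_1,\ldots,Z_{r-1}$ directions — in other words $\varphi(d_{rs})\equiv (\text{unit})\, a_{rs} \pmod{\Lambda_1^{W_1}\otimes\cdots\otimes\Lambda_{r-1}^{W_{r-1}}\otimes\Lambda_r^{W_r,<s}}$. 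This is exactly the statement that $z_r(u)$, which is (a $q$-shift of) the Capelli/quantum-characteristic polynomial of the $r\times r$ block, has as its coefficients the $h_r$-images of a filtered set of central generators.

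With these two facts in hand the conclusion is immediate: the $\varphi(d_{rs})$, $1\le s\le r\le N$, together with $\varphi(d_{rr}^{-1})$, generate $\Gamma$, and the triangularity just described (each $\varphi(d_{rs})$ equals, modulo the subalgebra generated by the earlier $d_{r's'}$, a unit times the fresh polynomial generator $a_{rs}$, while $\varphi(d_{r0})=\varphi(d_{rr})^{-1}$ is a unit times $P_r$) lets one set up a change of variables expressing the $a_{ms},P_m$ polynomially in the $\varphi(d_{rs})$ and vice versa. Hence $\varphi(\Gamma_q)=\Gamma\simeq\C[d_{rs}\mid 1\le s\le r\le N][d_{rr}^{-1}\mid 1\le r\le N]$, and since $\varphi$ is injective on $\Gamma_q$ this gives the claimed isomorphism, with $d_{r0}=d_{rr}^{-1}$ accounting for the localization.

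I expect the main obstacle to be the second step: making precise the ``triangular'' relationship between $\varphi(d_{rs})$ and the elementary symmetric functions $a_{rs}=e_s(Y_{r1},\ldots,Y_{rr})$. One must carefully track, in the expansion of \eqref{eq:drs} under $\varphi$ and $\xi_r^{-1}\circ h_r$, that the ``leading term'' in the $\Lambda_r$-direction of the coefficient of $(q^2u)^{-s}$ is indeed a nonzero scalar times $P_r^{-1}e_s(Y_r)$, with all corrections landing in $\Lambda_1^{W_1}\otimes\cdots\otimes\Lambda_{r-1}^{W_{r-1}}$ (or in lower symmetric degree). Equivalently — and this may be the cleanest route — one can bypass $\varphi$ entirely on the $t_{ij}$ side: show directly that $d_{rr}$ is invertible, that the coefficients of $z_r(u)$ generate $Z_r$ (cited from \cite{MH}), and that the quantum Harish-Chandra homomorphism $h_r:Z_r\to\C[K_1^{\pm1},\ldots,K_r^{\pm1}]$ is injective with image the $W_r$-invariants, whence $\bigoplus_r Z_r$ — and thus $\Gamma_q$, since distinct $Z_r$'s live in ``independent'' variables after applying all the $h_m$ — is a polynomial ring localized at the $d_{rr}$. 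Either way the crux is the explicit identification of the image of the generators, which is a bookkeeping computation rather than a conceptual difficulty.
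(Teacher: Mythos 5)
Your route is essentially the paper's: push everything through $\varphi$ and the quantum Harish--Chandra homomorphism, identify $\varphi(d_{rs})$ with (a unit times) $P_r^{\mp 1}$ times an elementary symmetric polynomial, and read off generation and algebraic independence from the tensor decomposition $\La^G\simeq\La_1^{W_1}\otimes\cdots\otimes\La_N^{W_N}$. Two points need correcting, however. First, your description of $\La_m^{W_m}$ is wrong: the $W_m$-action in \eqref{eq:G_action_on_skew_monoid_1} is by permutations and an even number of \emph{sign changes} $X_{mi}\mapsto -X_{mi}$, not by inversions $X_{mi}\mapsto X_{mi}^{-1}$. Your elements $Y_{mi}=X_{mi}+X_{mi}^{-1}$ satisfy $Y_{mi}\mapsto -Y_{mi}$ under a sign change, so $e_s(Y_{m1},\ldots,Y_{mm})$ is not $W_m$-invariant and cannot generate the invariant ring. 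The correct generators (used in the proof of Proposition \ref{prp:UHC}) are $e_{ms}(X_{m1}^2,\ldots,X_{mm}^2)$ for $s<m$ together with $(X_{m1}\cdots X_{mm})^{\pm1}$; these play exactly the role you intend for the $a_{ms}$ and $P_m^{\pm1}$, so the structure of your argument survives the substitution, but as written the ``standard computation'' you invoke is false.

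Second, the ``triangularity modulo lower terms'' that you flag as the main obstacle does not actually arise. The paper computes $h_r(z_r(u))$ in closed form: it factors completely as $q^{r(r+1)}(K_1\cdots K_r)^{-1}\prod_{j=1}^r\bigl(q^{-2j}K_j^2-(q^2u)^{-1}\bigr)$, so each coefficient $h_r(d_{rs})$ is \emph{exactly} a nonzero scalar times $(\widetilde K_1\cdots\widetilde K_r)^{-1}$ times an elementary symmetric polynomial in $\widetilde K_1^2,\ldots,\widetilde K_r^2$ (with $\widetilde K_i=q^{-i}K_i$) --- there are no correction terms landing in $Z_1,\ldots,Z_{r-1}$, since $h_r(d_{rs})$ involves only $K_1,\ldots,K_r$ symmetrically to begin with. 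This gives $Z_r\simeq\C[d_{r1},\ldots,d_{rr}][d_{rr}^{-1}]$ directly. The only remaining point, which you pass over quickly, is that the multiplication map $Z_1\otimes\cdots\otimes Z_N\to\Gamma_q$ is an isomorphism (i.e.\ there are no hidden relations among the different $Z_r$ inside $\Gamma_q$); the paper settles this with the commutative diagram comparing it to $g:\La_1^{W_1}\otimes\cdots\otimes\La_N^{W_N}\to\La^G$, which is the precise form of your remark that the $Z_r$ ``live in independent variables.''
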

\begin{proof}
By applying the quantum Harish-Chandra isomorphism
$h_r:Z_r\to (U_r^{0})^{W_r}$ (see \cite[Lemma 5.3]{FH}) to the polynomial $z_r(u)$ from \eqref{eq:zNu} (as in \cite[Section 5]{HM}) we get
\begin{align*}
h_r(z_r(u)) =& (K_1-K_1^{-1}u^{-1})(K_2-q^2K_2^{-1}u^{-1})\cdots (K_r-q^{2(r-1)}K_r^{-1}u^{-1}) \\
=& q^{r(r+1)}(K_1\cdots K_r)^{-1}\prod_{j=1}^r (q^{-2j}K_j^2-(q^2u)^{-1})
\end{align*}
So
\[h_r(d_{rs}) = q^{r(r+1)/2}(\widetilde{K}_1\cdots\widetilde{K}_r)^{-1}\cdot e_{rs}(\widetilde{K}_1^2,\ldots,\widetilde{K}_r^2),\quad r\in\iv{1}{N}, s\in\iv{0}{r}\]
where $\widetilde{K}_i = q^{-i}K_i$, and $e_{rs}$ is the elementary symmetric polynomial in $r$ variables of degree $s$. By the proof of \cite[Lemma 5.3]{FH}, this shows that
\begin{equation}\label{eq:Zriso}
Z_r\simeq \C[d_{rs}\mid s=1,2,\ldots,r][d_{rr}^{-1}].
\end{equation}
Recall that $\La^G\simeq \La_1^{W_1}\otimes\cdots\otimes \La_N^{W_N}$.
As shown in \cite{FH} there is an injective map $\varphi:U\to ((\Frac \La)\ast\Z^{n(n-1)/2})^G$ such that $\varphi$ restricts to an isomorphism $\varphi|_{\Gamma_q}:\Gamma_q\to \La^G$
and $\varphi_i:=\varphi|_{Z_m}: Z_m\to \La_m^{W_m}$ for each $m\in\iv{1}{N}$.
Thus we have a commutative diagram
\[
\xymatrix@C=1.5cm@M=1ex{ 
 \Gamma_q \ar[r]^{\varphi|_{\Gamma_q}} &  \La^G\\
 Z_1\otimes\cdots\otimes Z_N \ar[u]^f \ar[r]^{\varphi_1\otimes\cdots\otimes \varphi_N} & \La_1^{W_1}\otimes\cdots\otimes\La_N^{W_N} \ar[u]_g
}
\]
where the vertical arrows are given by multiplication. The horizontal maps and $g$ are isomorphisms. Hence $f$ is an isomorphism. Combining this fact with \eqref{eq:Zriso} we obtain the required isomorphism.
\end{proof}

\section{Leading term of generators}

In this section we prove the main theorem which determines the leading term of each of the generators $d_{rs}$ of $\Gamma_q$ with respect to the De Concini-Kac filtration.

\begin{thm}\label{thm:leadingTerm}
The leading term of $d_{rs}$ (see \eqref{eq:drs}), with respect to the De Concini-Kac filtration using \eqref{eq:w0decomp} as decomposition of the longest Weyl group element, is obtained by taking
\[\si = (1\; 2\; \cdots\; r)^s.\]
in the sum \eqref{eq:drs}. That is,
\begin{equation}\label{eq:ltdrs}
\lt(d_{rs}) = \la\cdot
 t_{1+s,1}^{(0)}t_{2+s,2}^{(0)}\cdots t_{r,r-s}^{(0)}\cdot t_{1,r-s+1}^{(1)}t_{2,r-s+2}^{(1)}\cdots t_{s,r}^{(1)}
\end{equation}
for some nonzero $\la\in\C$. 
\end{thm}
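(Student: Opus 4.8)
The plan is to compute, for each permutation $\si\in S_r$ and each choice of $k\in\{0,1\}^r$ with $\sum k_i=s$, the total degree $d(t_{\si(1)1}^{(k_1)}\cdots t_{\si(r)r}^{(k_r)})$ under the De Concini-Kac filtration, using the multiplicativity $d(ab)=d(a)+d(b)$ (noted right after Theorem~\ref{thm:DeConciniKac}), and then to show that the claimed choice $\si=(1\,2\,\cdots\,r)^s$, $k=(0,\ldots,0,1,\ldots,1)$ (with $s$ ones at the end), strictly maximizes this degree in the lexicographic order on $\Z_{\ge0}^{2M+1}$. Since $\gr U$ is a domain, distinct top-degree monomials cannot cancel, so once we show the maximizer is unique, the leading term is exactly the corresponding term of \eqref{eq:drs} up to the scalar $(-q)^{-l(\si)}q^{2(k_1+\cdots+rk_r)}$, which gives the nonzero $\la$.

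The key computational input is the total degree of each generator $t_{ij}^{(k)}$. From the identification \eqref{eq:identification}: $t_{ii}=K_i$ and $\bar t_{ii}=K_i^{-1}$ have degree $0$; for $i>j$, $t_{ij}=-(q-q^{-1})F_{\be_{ji}}K_i$ contributes one copy of $F_{\be_{ji}}$, and for $i<j$, $\bar t_{ij}$ is a nonzero multiple of $K_i^{-1}E_{\be_{ij}}$, contributing one copy of $E_{\be_{ij}}$ (and $t_{ij}=0=\bar t_{ji}$ for $i<j$, so only these cases arise). Thus $d(t_{\si(1)1}^{(k_1)}\cdots t_{\si(r)r}^{(k_r)})$ is the sum over $j$ of the degree of $F_{\be_{j,\si(j)}}$ (when $\si(j)>j$, forced $k_j=0$) or $E_{\be_{\si(j),j}}$ (when $\si(j)<j$, forced $k_j=1$) or $0$ (when $\si(j)=j$). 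Here I must carefully track (i) which positive root $\be_{ab}$ ($a<b$) appears, (ii) its position in the enumeration \eqref{eq:posRoots}, which determines the relevant coordinate among the first $2M$ entries of $d$, and (iii) its height $b-a$, which feeds the last coordinate via \eqref{eq:heightMonDef}. Because a nontrivial $\si\in S_r$ with $\si(j)=j$ forbidden outside fixed points forces the non-fixed part to be a genuine permutation, and the constraint $\sum k_i=s$ forces exactly $s$ of the indices $j$ to satisfy $\si(j)<j$, one gets a combinatorial optimization: maximize lexicographically the resulting degree vector over all such $(\si,k)$.

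I expect the main obstacle to be the lexicographic bookkeeping in step (ii)–(iii): showing that among all admissible $(\si,k)$, the cycle power $(1\,2\,\cdots\,r)^s$ produces the lexicographically largest degree. The heuristic is that $(1\,2\,\cdots\,r)^s$ sends $j\mapsto j+s$ for $j\le r-s$ (giving $E$-type... wait, $\si(j)=j+s>j$, so $F$-type via $t_{j+s,j}$) and $j\mapsto j+s-r$ for $j>r-s$ (giving $\si(j)<j$, $E$-type via $\bar t_{j, j+s-r}$, i.e. root $\be_{j+s-r,\,j}$); this choice tends to produce roots of extreme height ($\be_{j,j+s}$-type roots $\ep_j-\ep_{j+s}$ of height $s$ packed uniformly) and, crucially, to make the high-index coordinates $k_M,\ldots$ (the $E$-part, read in the order $k_M,\ldots,k_1$ in \eqref{eq:totalDegree}) as large as possible as early as possible. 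I would argue this by: first observing that any competitor either uses fewer or differently-placed $E$-vectors, which costs in the leading $E$-coordinates; then, among those agreeing on the $E$-pattern, comparing heights; and finally using the ordering $u_1<u_2<\cdots<u_M$ on $\Z_{\ge0}^{2M+1}$ together with \eqref{eq:posRoots} to pin down that the specific roots $\be_{a,a+s}$ arising from the cyclic choice beat all alternatives. It may be cleanest to induct on $r$ (or on $s$), peeling off the largest-index factor, since $U_q^{(r)}$ sits compatibly inside $U_q^{(r+1)}$ and the root enumeration \eqref{eq:posRoots} is a refinement compatible with this chain.
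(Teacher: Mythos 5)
Your overall framework is the right one and matches the paper's: read off the De Concini--Kac degree of each monomial $t_{\si(1)1}^{(k_1)}\cdots t_{\si(r)r}^{(k_r)}$ from the identification \eqref{eq:identification}, use $d(ab)=d(a)+d(b)$ and the fact that $\gr U_q$ is a quantum affine space (so no cancellation among distinct top monomials), and argue that $(1\,2\,\cdots\,r)^s$ is the unique lexicographic maximizer. But two concrete problems remain. First, you have the significance order of the coordinates backwards. The order on $\Z_{\ge0}^{2M+1}$ determined by $u_1<u_2<\cdots$ makes the \emph{last} coordinate --- the height --- the most significant, then the $F$-exponents $r_M,\ldots,r_1$ (largest root first), and only last the $E$-exponents $k_1,\ldots,k_M$ (smallest root first, ``in reverse''); this is exactly the comparison sequence worked through in the paper's example for $\lt(d_{42})$. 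Your plan compares ``the leading $E$-coordinates'' first and heights afterwards, because you read the tuple \eqref{eq:totalDegree} left to right; with that inverted priority the optimization you describe would not certify the claimed maximizer.

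Second, and more seriously, the assertion that ``the constraint $\sum k_i=s$ forces exactly $s$ of the indices $j$ to satisfy $\si(j)<j$'' is false for permutations with fixed points: a fixed point contributes $t_{jj}^{(k_j)}=K_j^{\pm1}$, which is nonzero for either value of $k_j$ and has degree zero, so one only gets the inequalities $c_<(\si)\le s$ and $c_>(\si)\le r-s$ (Lemma \ref{lem:nonzero}). Showing that the maximizing permutation must be a \emph{derangement} is the main technical step of the paper's proof (Lemma \ref{lem:derangement}), carried out by an explicit construction that trades a fixed point for an extra ``drop'' or ``jump'' while weakly increasing the total degree; only after that does one get $c_<(\si)=s$, pin down $\height(\si)=2s(r-s)$ (Lemma \ref{lem:height}), and then determine $\si$ factor by factor starting from $\si^{-1}(r)=r-s$. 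Your proposal flags the combinatorial optimization as ``the main obstacle'' but does not carry it out, so as it stands it is a plan with a wrong orientation of the order rather than a proof.
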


\begin{example}
As an example, we determine directly the leading term of $d_{42}$. The most significant
component of the total degree \eqref{eq:totalDegree} is the height. Using \eqref{eq:heightDef}-\eqref{eq:heightCorr},
it is easy to see that there are four permutations in $S_4$ which gives the maximal possible height $8$:
\[(13)(24),\quad (14)(23),\quad (1324),\quad (1423).\]
The monomial associated to such a permutation $\si$ is
\[t_{\si(1)1}^{(k_1)}t_{\si(2)2}^{(k_2)}t_{\si(3)3}^{(k_3)}t_{\si(4)4}^{(k_4)}\]
where $k_i=0$ if $\si(i)>i$ and $k_i=1$ if $\si(i)<i$.
After the height we need to compare the exponent of $F_{\be_{34}}$ in the four different monomials,
because $\be_{34}$ is the largest positive root in the ordering
\[\be_{12} < \be_{13}<\be_{14}<\be_{23}<\be_{24}<\be_{34}\]
(see \eqref{eq:posRoots}).
This exponent is the same as the exponent (either $1$ or $0$) of $t_{43}^{(0)}$ due to the identifications \eqref{eq:identification}.
But this exponent is $0$ in all four cases because none of the permutations map $3$ to $4$.

So we look at the second largest positive root, which is $\be_{24}$. As in the previous case,
we ask if $\si(2)=4$ in any of the four permutations. There are two for which this holds,
$(13)(24)$ and $(1324)$. The others do not map $2$ to $4$ which means their corresponding
monomials are of lower total degree.

To compare the two candidates $(13)(24)$ and $(1324)$ we look at the third largest root, $\be_{23}$.
But $\si(2)\neq 3$ in both. Next is $\be_{14}$ but again $\si(1)\neq 4$ in both.
Next is $\be_{13}$ and now $\si(1)=3$ for both $\si=(13)(24)$ and $\si=(1324)$.
 Next is $\be_{12}$ and $\si(1)\neq 2$ in both. So we still don't know
which monomial is largest. We have compared the $1+6$ biggest components of the total degree,
namely the height and the $6$ exponents of the negative root vectors $F_{\be}$.

Thus we turn to comparing the remaining $6$ exponents of the positive root vectors $E_\be$.
Now care must be taken since, by \eqref{eq:totalDegree}, these are ordered in reverse
relative to the positive roots themselves.
Therefore, the next component to compare is the exponent of $E_{\be_{12}}$ because $\be_{12}$
is the smallest root. By \eqref{eq:identification}, this is the same as the
exponent of $t_{12}^{(1)}$ so we check if the permutations satisfy $\si(2)=1$. None of them do, so
we move on, checking $E_{\be_{13}}$ which amounts to checking if $\si(3)=1$.
Here we finally get a discrepancy, $(13)(24)$ satisfies this, but $(1324)$ does not.
Therefore $(13)(24)$ is the permutation that gives the leading term in $d_{42}$.

Of course, $(13)(24)=(1234)^2$, so this proves Theorem \ref{thm:leadingTerm} in the case $(r,s)=(4,2)$.
\end{example}

We will need several lemmas. The following notation will be used for a permutation $\si\in S_r$:
\[\mathrm{EX}(\si)=\{i\in\iv{1}{r}\mid \si(i)>i \},
\qquad 
\mathrm{AX}(\si)=\{i\in\iv{1}{r}\mid \si(i)<i \}.\]
Elements of $\mathrm{EX}(\sigma)$ (respectively $\mathrm{AX}(\sigma)$ are called \emph{exceedances} (respectively \emph{anti-exceedances}) for $\sigma$.

The following lemma describes which nonzero terms appear in $d_{rs}$.
\begin{lem}\label{lem:nonzero}
Let $s\in\iv{1}{r}$ and let $\si\in S_r$. Then the following two statements are equivalent.
\begin{enumerate}[{\rm (i)}]
\item $t_{\si(1)1}^{(k_1)}t_{\si(2)2}^{(k_2)}\cdots t_{\si(r)r}^{(k_r)}\neq 0$
for some $k\in\{0,1\}^r$ with $\sum_{i=1}^r k_i=s$;
\item $\#\mathrm{AX}(\si)\le s$ and $\#\mathrm{EX}(\si)\le r-s$.
\end{enumerate}
\end{lem}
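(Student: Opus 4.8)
The plan is to reduce statement (i) to a purely combinatorial condition describing which of the individual factors $t_{\si(\ell)\ell}^{(k_\ell)}$ are allowed to be nonzero, and then to observe that this condition is exactly (ii).

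First I would use that $U=U_q(\mf{gl}_N)$ is a domain, as noted in Section~\ref{sec:filt} (the associated graded algebra of the De Concini-Kac filtration is a domain, so $U$ itself is). Consequently a product of nonzero elements of $U$ is nonzero, and therefore the monomial $t_{\si(1)1}^{(k_1)}\cdots t_{\si(r)r}^{(k_r)}$ is nonzero if and only if \emph{every} factor $t_{\si(\ell)\ell}^{(k_\ell)}$ is nonzero. Thus (i) is equivalent to the existence of $k\in\{0,1\}^r$ with $\sum_{\ell}k_\ell=s$ such that $t_{\si(\ell)\ell}^{(k_\ell)}\neq 0$ for all $\ell\in\iv{1}{r}$. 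Next I would record, straight from the RTT relations \eqref{eq:RTT}, exactly when the generators vanish: $t_{ij}=0$ precisely when $i<j$ and $\bar t_{ij}=0$ precisely when $i>j$, while $t_{ii}$ and $\bar t_{ii}$ are invertible. In the notation \eqref{eq:tijk} this reads $t_{ij}^{(0)}\neq 0\iff i\ge j$ and $t_{ij}^{(1)}\neq 0\iff i\le j$.

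Applying this with $i=\si(\ell)$, $j=\ell$, for each $\ell\in\iv{1}{r}$, yields: if $\si(\ell)>\ell$ then nonvanishing of the $\ell$-th factor forces $k_\ell=0$; if $\si(\ell)<\ell$ it forces $k_\ell=1$; and if $\si(\ell)=\ell$ then either value of $k_\ell$ is allowed, since both $t_{\ell\ell}$ and $\bar t_{\ell\ell}$ are nonzero. Hence the vectors $k$ for which all factors are nonzero form a combinatorial box (a product of singletons at the ascent and descent positions and of $\{0,1\}$ at the fixed points), and on this box $\sum_\ell k_\ell$ ranges over precisely the integers in the interval $[\,c_<(\si),\, c_<(\si)+f(\si)\,]$, where $f(\si)$ is the number of fixed points of $\si$. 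Since $c_<(\si)+c_>(\si)+f(\si)=r$, this interval equals $[\,c_<(\si),\, r-c_>(\si)\,]$. Therefore (i) holds if and only if $c_<(\si)\le s\le r-c_>(\si)$, which is the same as the pair of inequalities $c_<(\si)\le s$ and $c_>(\si)\le r-s$, i.e.\ (ii).

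I do not expect a genuine obstacle here. The only two points that deserve an explicit sentence are the passage from nonvanishing of the monomial to nonvanishing of each of its factors (which is where the domain property of $U_q(\mf{gl}_N)$ enters) and the observation that a fixed point of $\si$ genuinely leaves $k_\ell$ free, because both $t_{\ell\ell}$ and $\bar t_{\ell\ell}$ are invertible in $U$.
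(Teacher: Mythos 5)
Your argument is correct and is essentially the paper's proof: the paper disposes of the lemma in one sentence by citing exactly the vanishing criterion $t_{ij}^{(1)}\neq 0\iff i\le j$, $t_{ij}^{(0)}\neq 0\iff i\ge j$, and you have simply made explicit the two implicit steps (the domain property of $U_q(\mf{gl}_N)$ reducing nonvanishing of the monomial to nonvanishing of each factor, and the counting of admissible $k$ via the fixed points of $\si$). No gaps.
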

\begin{proof}
This follows from the fact that $t_{ij}^{(1)}\neq 0$ iff $i\le j$ and $t_{ij}^{(0)}\neq 0$ iff $i\ge j$.
\end{proof}

Define the \emph{height} of a permutation $\si\in S_r$ by
\begin{equation}\label{eq:heightDef}
\height(\si):=\sum_{i=1}^r |\si(i)-i|.
\end{equation}
The motivation for this terminology comes from the fact that
\begin{equation}\label{eq:heightCorr}
\height(\si) = \height( t_{\si(1)1}^{(k_1)}t_{\si(2)2}^{(k_2)}\cdots t_{\si(r)r}^{(k_r)})
\end{equation}
where the right hand side is given by \eqref{eq:heightMonDef} and the identification \eqref{eq:identification}.

As the next step towards proving Theorem \ref{thm:leadingTerm}, we show that the permutation $\si$ which gives
the leading term of $d_{rs}$ has to be a derangement (i.e. $\si(i)\neq i\;\forall i\in\iv{1}{r}$).

\begin{lem}\label{lem:derangement}
Let $s\in\iv{1}{r}$ and let $\si\in S_r$ be a permutation such that
\[t_{\si(1)1}^{(k_1)}t_{\si(2)2}^{(k_2)}\cdots t_{\si(r)r}^{(k_r)}\neq 0\]
for some $k\in\{0,1\}^r$ with $\sum_i k_i=s$. Then there exists a $\widetilde{\si}\in S_r$ such that
\begin{enumerate}[{\rm (i)}]
\item \label{it:der1} $t_{\widetilde{\si}(1)1}^{(l_1)}\cdots t_{\widetilde{\si}(r)r}^{(l_r)}\neq 0$ for some
$l\in\{0,1\}^r$ with $\sum_i l_i=s$;
\item \label{it:der2} $t_{\widetilde{\si}(1)1}^{(l_1)}\cdots t_{\widetilde{\si}(r)r}^{(l_r)} \ge t_{\si(1)1}^{(k_1)}\cdots t_{\si(r)r}^{(k_r)}$ with respect to the De Concini-Kac filtration;
\item \label{it:der3} $\widetilde{\si}$ is a derangement.
\end{enumerate}
In particular, the permutation $\si$ such that \eqref{eq:ltdrs} holds is a derangement.
\end{lem}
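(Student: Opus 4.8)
The plan is to argue that, whenever $\si$ has a fixed point, one can modify $\si$ by a single transposition to strictly increase the associated monomial in the total-degree order, while preserving both the nonvanishing condition and the constraint $\sum_i k_i=s$; iterating removes all fixed points. First I would fix notation: given $\si$ with $t_{\si(1)1}^{(k_1)}\cdots t_{\si(r)r}^{(k_r)}\neq 0$, Lemma~\ref{lem:nonzero} tells us $c_<(\si)\le s$ and $c_>(\si)\le r-s$. Let $F=\{i : \si(i)=i\}$ be the fixed-point set, and suppose $F\neq\emptyset$. The key observation is that on a fixed point $i$ one is forced to choose an exponent $k_i\in\{0,1\}$ (since $t_{ii}^{(0)}=t_{ii}$ and $t_{ii}^{(1)}=\bar t_{ii}$ are both nonzero), and that replacing $\si$ on a pair $\{i,j\}$ by $\si\cdot(i\,j)$ trades two "diagonal" factors for two off-diagonal ones of larger height.

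The main step is the following local move. If $i<j$ are two fixed points of $\si$, replace $\si$ by $\si'=\si\cdot(i\,j)$, so $\si'(i)=j>i$ and $\si'(j)=i<j$; put $l_i=0$, $l_j=1$ (forced by nonvanishing: $t_{ji}^{(1)}=\bar t_{ji}=0$ since $j>i$, so we need $t_{ji}^{(0)}$ and $t_{ij}^{(1)}$), and leave the rest of $k$ unchanged. Since on $i,j$ we had two diagonal exponents summing to some value and now have one $0$ and one $1$, the sum $\sum l_i$ need not equal $s$ — but here is where I use the remaining freedom: the counts change by $c_<(\si')=c_<(\si)+1$ and $c_>(\si')=c_>(\si)+1$, and if originally $c_<(\si)<s$ and $c_>(\si)<r-s$ strictly there is room; the sum constraint is then fixed by toggling one of the diagonal exponents on another fixed point (there must be a spare one unless $F$ is small, a case handled separately). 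The height strictly increases by $2(j-i)>0$ under this move, so by \eqref{eq:heightCorr} and the definition \eqref{eq:totalDegree} of $d$ (height being a component, though not the most significant one — one must check the move does not decrease a more significant $F$- or $E$-exponent component first), the new monomial is $\ge$ the old.

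If $F$ contains only one element, or the counts $c_<,c_>$ are already at their maximum so there is no room, I would instead pair the lone fixed point $i$ with a non-fixed index: choose $j$ with $\si(j)\neq j$ and redistribute via a transposition, again checking via Lemma~\ref{lem:nonzero} that nonvanishing is preserved and adjusting one exponent to restore $\sum l_i=s$; a short case analysis on whether $\si(j)>j$ or $\si(j)<j$ shows one of the two choices of partner works. Iterating the move strictly decreases $\#F$ each time while never decreasing the monomial, so after finitely many steps we reach a derangement $\widetilde\si$ satisfying \eqref{it:der1}--\eqref{it:der3}; applying this to the $\si$ realizing the leading term of $d_{rs}$ and using that the leading term is unique (the associated graded algebra is a domain, so the leading coefficient cannot cancel) forces that $\si$ to have been a derangement already. \textbf{The main obstacle} I anticipate is the bookkeeping of the total-degree order: height is far from the top component, so after each local move I must verify that no higher-priority component (an $E$-exponent, which by \eqref{eq:totalDegree} are the \emph{most} significant and ordered in reverse, or an $F$-exponent) is decreased; controlling this will likely require choosing the transposition pair $\{i,j\}$ carefully — presumably the \emph{outermost} available pair — so that the newly created root vectors $E_{\be_{ij}}$, $F_{\be_{ij}}$ sit as far down the root ordering as possible and hence affect only low-priority components.
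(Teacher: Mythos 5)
Your overall strategy coincides with the paper's: kill fixed points one at a time by composing with a transposition, use Lemma~\ref{lem:nonzero} to control $c_<$ and $c_>$, and argue that each move does not decrease the monomial. But there are two genuine gaps. First, your primary move (swapping two fixed points $i<j$) increases \emph{both} $c_<$ and $c_>$ by one, so it is only legal when $c_<(\si)<s$ and $c_>(\si)<r-s$ hold simultaneously; from the existence of a fixed point one only knows that \emph{at least one} of these strict inequalities holds. The generic situation is therefore your ``fallback'' case, where a fixed point $j$ must be paired with a non-fixed index $i$ so as to add exactly a drop (if $c_<<s$) or exactly a jump (if $c_><r-s$), and here ``a short case analysis on whether $\si(j)>j$ or $\si(j)<j$'' does not suffice: whether $\si\circ(i\,j)$ adds a drop or a jump depends on the position of $j$ relative to the whole interval between $i$ and $\si(i)$, and not every non-fixed partner works (for $\si=(1\,2)\in S_3$ with $s=1$ and fixed point $j=3$, the partner $i=1$ yields $c_<=2>s$, while $i=2$ works). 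The paper devotes the bulk of its proof to showing, via the cycle structure of $\si$ (a drop, followed by a chain of jumps, followed by a drop), that a ``drop-admissible'' and a ``jump-admissible'' pair always exist; this existence argument is the heart of the lemma and is absent from your plan.

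Second, you have the significance order in \eqref{eq:totalDegree} backwards: the height is the \emph{most} significant component (see the worked example following Theorem~\ref{thm:leadingTerm}), followed by the $F$-exponents from the largest root downward, with the $E$-exponents least significant. With the correct reading, your two-fixed-point move is immediately fine because the height strictly increases. But in the unavoidable case where the fixed point $j$ lies strictly between $i$ and $\si(i)$, the height is \emph{unchanged}, and one must compare $F$-exponents: the paper's move wins because it replaces $F_{\be_{i,\si(i)}}$ by $F_{\be_{j,\si(i)}}F_{\be_{ij}}$, and $\be_{j,\si(i)}$ is \emph{larger} than $\be_{i,\si(i)}$ in the ordering \eqref{eq:posRoots}. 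Your heuristic of choosing the pair so that the newly created roots sit ``as far down the root ordering as possible'' points in the wrong direction and would not close this case.
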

\begin{proof}
If $\si$ already is a derangement, there is nothing to prove (take $\widetilde{\si}=\si$).
So suppose $f:=\#\{i\in S_r\mid \si(i)=i\}>0$. It is enough to construct $\widetilde{\si}$ satisfying
properties \eqref{it:der1}-\eqref{it:der2} with $\#\{i\in S_r\mid \widetilde{\si}(i)=i\} = f-1$
because then we can iterate this construction to arrive at a permutation satisfying all three conditions
\eqref{it:der1}-\eqref{it:der3}.

For brevity, we call $(i,\si(i))\in\iv{1}{r}^2$ a \emph{$\sigma$-jump} (respectively
\emph{$\sigma$-drop}) if $i$ is an exceedance (respectively anti-exceedance) for $\sigma$.
It will be useful to visualize a sequence $(i,\si(i),\ldots,\si^k(i))$ as a graph with vertex set $\{(x,\si^x(i))\mid x\in\iv{0}{k}\}\subset \Z^2$,
 connecting adjacent vertices $(a,b)$ and $(a+1,\sigma(b))$, as in Figure \ref{fig:example}. Then drops and jumps are simply as in Figure \ref{fig:dropjump}.

\begin{figure}[h]
\centering
\begin{tikzpicture}
  \tikzstyle{every node}=[draw,circle,fill=black,minimum size=3pt,inner sep=0pt]
  \draw (0,0) node [label=left:$1$] { } --
               (1,2) node [label=left:$3$] {} --
               (2,3) node [label=left:$4$] {} --
               (3,1) node [label=left:$2$] {} --
               (4,0) node [label=left:$1$] {};
\end{tikzpicture}
\caption{Pictorial representation of the cyclic permutation $(1342)$.}
\label{fig:example}
\end{figure}
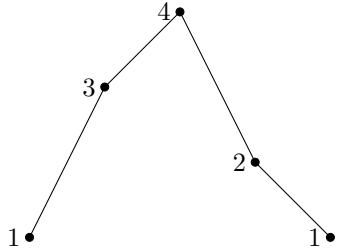

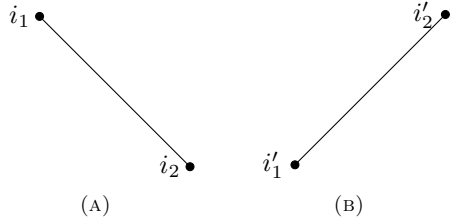
\begin{figure}[h]
\centering
\subfloat[]{
\begin{tikzpicture}
  \tikzstyle{every node}=[draw,circle,fill=black,minimum size=3pt,inner sep=0pt]
  \draw  (0,2) node [label=left: $i_1$] {} --
               (2,0) node [label=left: $i_2$] {};
\end{tikzpicture}}\qquad
\subfloat[]{
\begin{tikzpicture}
  \tikzstyle{every node}=[draw,circle,fill=black,minimum size=3pt,inner sep=0pt]
  \draw  (0,0) node [label=left: $i_1'$] {} --
               (2,2) node [label=left: $i_2'$] {};
\end{tikzpicture}}
\caption{A $\si$-drop (A) and a $\si$-jump (B). The diagrams mean $i_2=\si(i_1)$, $i_1>i_2$ and $i_2'=\si(i_1')$, $i_1'<i_2'$.}
\label{fig:dropjump}
\end{figure}

A $\sigma$-drop $(i_1,i_2)$ will be called \emph{drop-admissible} if we can ``add another drop between $i_1$ and $i_2$'',
that is, if there exists $j\in\iv{1}{r}$ with $\si(j)=j$ and $i_2<j<i_1$.
Then we can put $\widetilde{\si}=\si\circ (i_1\; j)$. With this $\widetilde{\si}$ we have
\[\#\mathrm{AX}(\widetilde{\si})=1+\#\mathrm{AX}(\si),\qquad \#\mathrm{EX}(\widetilde{\si})=\#\mathrm{EX}(\si).\]

Similarly, a $\sigma$-drop $(i_1,i_2)$ is \emph{jump-admissible} if there exists $j\in\iv{1}{r}$ with $\si(j)=j$ and $j\notin\iv{i_2}{i_1}$.
Then $\widetilde{\si}=\si\circ (i_1\; j)$ satisfies
\[\#\mathrm{AX}(\widetilde{\si})=\#\mathrm{AX}(\si),\qquad \#\mathrm{EX}(\widetilde{\si})=1+\#\mathrm{EX}(\si).\]
See Figure \ref{fig:dropscenarios} for an illustration of the possible scenarios in the case of a $\sigma$-drop.

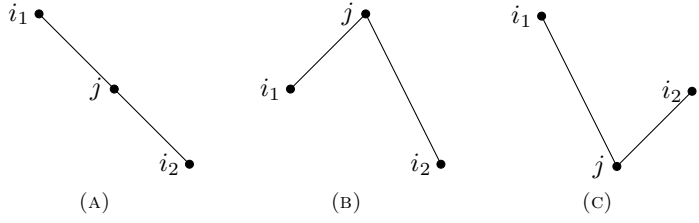
\begin{figure}[h]
\centering
\subfloat[]{
\begin{tikzpicture}
  \tikzstyle{every node}=[draw,circle,fill=black,minimum size=3pt,inner sep=0pt]
  \draw (0,2) node [label=left: $i_1$] {} --
               (1,1) node [label=left: $j$] {} --
               (2,0) node [label=left: $i_2$] {};
\end{tikzpicture}}\qquad
\subfloat[]{
\begin{tikzpicture}
  \tikzstyle{every node}=[draw,circle,fill=black,minimum size=3pt,inner sep=0pt]
  \draw  (3,1) node [label=left: $i_1$] {} --
               (4,2) node [label=left: $j$] {} --
               (5,0) node [label=left: $i_2$] {};
\end{tikzpicture}}\qquad
\subfloat[]{
\begin{tikzpicture}
  \tikzstyle{every node}=[draw,circle,fill=black,minimum size=3pt,inner sep=0pt]
  \draw   (6,2) node [label=left: $i_1$] {} --
               (7,0) node [label=left: $j$] {} --
               (8,1) node [label=left: $i_2$] {};
\end{tikzpicture}}
\caption{The three possible ways the $i_1,j,i_2$ piece of $\widetilde{\si}=\si\circ (i_1\; j)$ can look like, when $(i_1,i_2)$ is a $\si$-drop:
 $i_1<j<i_2$ (A),  $j>i_1,i_2$ (B), and $j<i_1,i_2$ (C). The $\si$-drop $(i_1,i_2)$ is drop-admissible in case (A), and jump-admissible in (B) and (C).}
\label{fig:dropscenarios}
\end{figure}

Analogously, a $\sigma$-jump $(i_1,i_2)$ is \emph{jump-admissible} if $\exists j\in\iv{1}{r}$ with $\si(j)=j$ and $i_1<j<i_2$.
A $\sigma$-jump $(i_1,i_2)$ is \emph{drop-admissible} if $\exists j\in\iv{1}{r}$ with $\si(j)=j$ and $j\notin \iv{i_1}{i_2}$.

We will now show that there always exists a jump-admissible $\si$-drop or $\si$-jump.

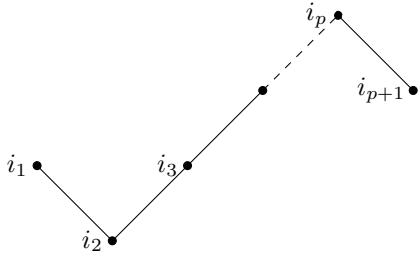
\begin{figure}[h]
\centering
\begin{tikzpicture}
  \tikzstyle{every node}=[draw,circle,fill=black,minimum size=3pt,inner sep=0pt]
  \draw (0,1) node [label=left: $i_1$] {} --
               (1,0) node [label=left: $i_2$] {} --
               (2,1) node [label=left: $i_3$] {} --
               (3,2) node [label=left: $ $] {};
  \draw[dashed](3,2)--(4,3) node [label=left: $i_p$] {};
  \draw (4,3)--(5,2) node [label=left: $i_{p+1}$] {};
\end{tikzpicture}
\caption{Illustration of a permutation $\si$ satisfying conditions (a)-(d).}
\label{fig:long}
\end{figure}

We know that $\si$ is not the identity permutation since $\sum_i k_i=s\ge 1$. Thus there exists a tuple
$(i_1,i_2,\ldots,i_p,i_{p+1})\in\iv{1}{r}^{p+1}$, where $p>2$, such that (see Figure \ref{fig:long})
\begin{enumerate}[{\rm (a)}]
\item $i_{j+1}=\si(i_j)$ for $j\in\iv{1}{p}$;
\item $i_1>i_2$;
\item $i_j<i_{j+1}$ for $j\in\iv{2}{p-1}$;
\item $i_p>i_{p+1}$.
\end{enumerate}
Note that we do not exclude the possibility that $(i_p,i_{p+1})=(i_1,i_2)$.
Also, since $\si$ is not a derangement, there is some $j\in\iv{1}{r}\setminus\{i_1,\ldots,i_{p+1}\}$ fixed by $\si$.

If $j\notin \iv{i_2}{i_1}$, then $(i_1,i_2)$ is a jump-admissible $\si$-drop (as in case (B) or (C) in Figure \ref{fig:dropscenarios}).
So suppose $i_1>j>i_2$. If $j<i_p$ then $(i_a,i_{a+1})$ is a jump-admissible $\si$-jump
for the $a\in\iv{2}{p-1}$ with $i_a<p<i_{a+1}$.
So suppose $j>i_p$. Then $(i_p,i_{p+1})$ is a jump-admissible $\si$-drop. This
proves that, provided $\si(j)=j$ for some $j$, there always exists a jump-admissible
$\si$-drop or $\si$-jump.

Similarly one proves there always exists a drop-admissible $\si$-drop or $\si$-jump.

If $\#\mathrm{AX}(\si)<s$ then we add a drop by putting $\widetilde{\si}=\si\circ (i\; j)$
where $(i,\si(i))$ is a drop-admissible $\si$-drop or $\si$-jump.
Then $\widetilde{\si}$ will have one more drop than $\si$ but the same number of jumps.
 That is, $\#\mathrm{AX}(\widetilde{\si})=1+\#\mathrm{AX}(\si)+1\le s$ and $\#\mathrm{EX}(\widetilde{\si})=\#\mathrm{EX}(\si)\le r-s$
which by Lemma \ref{lem:nonzero} ensures that property \eqref{it:der1} is satisfied.

Analogously, if instead $\#\mathrm{EX}(\si)<r-s$ we add a jump by putting $\widetilde{\si}=\si\circ (i\; j)$
for appropriate $i$.

Clearly $\widetilde{\si}$ has one less fixpoint than $\si$.

It remains to verify that property \eqref{it:der2} holds. The change from $\si$ to $\widetilde{\si}$
has the following effect on monomials:
\[t_{jj}^{(k_j)} t_{\si(i)i}^{(k_i)}\longmapsto t_{\widetilde{\si}(j)j}^{(k_j)} t_{\widetilde{\si}(i)i}^{(k_i)} = t_{\si(i)j}^{(k_j)} t_{ji}^{(k_i)}\]
(unchanged factors omitted).

If $j$ is not between $i$ and $\si(i)$, then by definition of the height \eqref{eq:heightDef} one checks that
 $\height(\widetilde{\si}) > \height(\si)$
so \eqref{it:der2} holds by just looking at the height, which is the most significant part of
the total degree (see \eqref{eq:totalDegree}).

If $j$ is between $i$ and $\si(i)$, then $\height(\widetilde{\si})=\height(\si)$
so we must compare roots in order to establish property \eqref{it:der2}.

Suppose $i<j<\si(i)$. Then the change from $\si$ to $\widetilde{\si}$ corresponds to
\[ t_{\si(i)i}^{(0)}t_{jj}^{(k_j)}\longmapsto t_{\si(i)j}^{(0)} t_{ji}^{(0)}\]
The change in total degrees is
\[ d(F_{\be_{i,\si(i)}}) \longmapsto d(F_{\be_{j,\si(i)}} F_{\be_{ij}})\]
Since $\be_{j,\si(i)}>\be_{i,\si(i)}, \be_{i,j}$ (recall the
ordering \eqref{eq:posRoots}) it follows that property
\eqref{it:der2} holds in this case. The case $i>j>\si(i)$ is
analogous, keeping in mind that $E_\be$ are ordered in reverse.
This finishes the proof of Lemma \ref{lem:derangement}.
\end{proof}

The following result describes the height of the permutation giving rise to the leading term.
\begin{lem}\label{lem:height}
Fix $r\in\Z_{>0}$ and let $s\in\iv{1}{r}$. Let $\si\in S_r$ be the permutation which gives rise to the
leading term of $d_{rs}$. That is,
\begin{equation}\label{eq:leading}
\lt(d_{rs}) = \la t_{\si(1)1}^{(k_1)}t_{\si(2)2}^{(k_2)}\cdots t_{\si(r)r}^{(k_r)}
\end{equation}
for some nonzero $\la\in\C$ and some $k\in\{0,1\}^r$ with $\sum_i k_i=s$.
Then
\begin{equation}\label{eq:height}
\height(\si) = 2s(r-s).
\end{equation}
\end{lem}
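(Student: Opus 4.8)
I want to show that the height of the permutation $\si$ giving rise to the leading term of $d_{rs}$ equals $2s(r-s)$. My strategy is twofold: first establish the inequality $\height(\si)\le 2s(r-s)$ as an upper bound valid for \emph{every} nonzero term in the sum \eqref{eq:drs} (not just the leading one), using the constraints from Lemma \ref{lem:nonzero}; then exhibit a single permutation for which both this height bound is attained \emph{and} the term is nonzero, which by maximality of the leading term forces $\height(\si)\ge 2s(r-s)$. Combining, $\height(\si)=2s(r-s)$. Since the height is the most significant component of the total degree \eqref{eq:totalDegree}, whichever permutation gives the leading term must in particular maximize the height among nonzero terms, so this argument suffices.

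\textbf{Step 1: the upper bound.} Let $\si\in S_r$ be any permutation contributing a nonzero term, so by Lemma \ref{lem:nonzero} we have $c_<(\si)\le s$ and $c_>(\si)\le r-s$. Write $\height(\si)=\sum_{i=1}^r|\si(i)-i|=\sum_{i:\,\si(i)>i}(\si(i)-i)+\sum_{i:\,\si(i)<i}(i-\si(i))$. The cleanest way to bound each of the two sums is to note that $\sum_i(\si(i)-i)=0$, so the sum of the positive contributions equals the sum of the negative contributions; call this common value $h$, so $\height(\si)=2h$. Now $h=\sum_{i:\,\si(i)>i}(\si(i)-i)$ is a sum of $c_>(\si)$ positive integers, each recording how far a ``jump'' travels; dually it is controlled by the $c_<(\si)$ ``drops''. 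The extreme configuration is reached when the support of $\si$ splits into a block of $s$ indices on which $\si$ decreases and a disjoint block of $r-s$ indices on which $\si$ increases (or the reverse). A short combinatorial estimate — bounding $\sum(\si(i)-i)$ over jumps by pairing the jumps and drops optimally, the worst case being $s$ entries each of which moves roughly $r-s$ positions — gives $h\le s(r-s)$, hence $\height(\si)\le 2s(r-s)$. I would phrase this via the identity $\sum_{\si(i)>i}(\si(i)-i)=\#\{(i,j):i\le j,\ \si(i)>j\ge i\}$ or an equivalent counting of lattice points, then bound the count by (number of drop-indices) $\times$ (number of jump-indices) $\le s(r-s)$, but the precise bookkeeping is where care is needed.

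\textbf{Step 2: attainment.} Consider $\si=(1\,2\,\cdots\,r)^s$, the candidate from Theorem \ref{thm:leadingTerm}. For this $\si$ one computes $\si(i)=i+s$ for $i\in\iv{1}{r-s}$ (a jump of length $s$) and $\si(i)=i-(r-s)$ for $i\in\iv{r-s+1}{r}$ (a drop of length $r-s$). Hence $\height(\si)=(r-s)\cdot s+s\cdot(r-s)=2s(r-s)$, and $c_<(\si)=s$, $c_>(\si)=r-s$, so by Lemma \ref{lem:nonzero} the corresponding monomial (with $k_i=0$ on the jump block and $k_i=1$ on the drop block, giving $\sum k_i=s$) is nonzero. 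Therefore the maximal height among nonzero terms is at least $2s(r-s)$, and by Step 1 it is at most that, so it equals $2s(r-s)$; in particular the leading-term permutation has height exactly $2s(r-s)$.

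\textbf{Main obstacle.} The routine part is Step 2. The genuine content is the upper bound in Step 1: proving that the constraints $c_<(\si)\le s$, $c_>(\si)\le r-s$ force $\height(\si)\le 2s(r-s)$. One must be careful that a single jump (or drop) could in principle be very long, so the bound cannot come from ``each of the $\le r-s$ jumps has length $\le$ something small''; instead it must exploit the global balance $\sum(\si(i)-i)=0$ together with the fact that a long jump must be compensated by drops, whose number is capped at $s$. I expect the clean formulation to be: $\height(\si)=2\sum_{j=1}^{r-1}\big|\{i\le j:\si(i)>j\}\big|$, and for each threshold $j$ the set $\{i\le j:\si(i)>j\}$ injects into the jump-indices and its complement-in-$\iv{1}{j}$ relates to drop-indices, yielding $\big|\{i\le j:\si(i)>j\}\big|\le\min(c_>(\si),\,\text{something})$; summing and optimizing gives $s(r-s)$. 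Nailing down this last inequality with the right intermediate quantity is the crux of the proof.
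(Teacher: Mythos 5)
Your overall architecture (an upper bound $\height(\si)\le 2s(r-s)$ valid for the maximizing term, plus attainment by $(1\,2\,\cdots\,r)^s$) is the same as the paper's, and your Step 2 is correct and essentially identical to the paper's lower-bound argument. The problem is Step 1: the upper bound is the entire content of the lemma, and you have not proved it --- you explicitly defer ``the precise bookkeeping'' and call the key inequality ``the crux.'' Worse, the one concrete route you propose is based on a false inequality. Writing $h=\sum_{i:\si(i)>i}(\si(i)-i)$, your suggested bound $h\le c_<(\si)\cdot c_>(\si)$ fails already for $\si=(1\,5)\in S_5$: there $h=4$ while $c_<(\si)=c_>(\si)=1$. (This $\si$ is a genuinely relevant term for $d_{5,1}$, since $c_<\le 1$ and $c_>\le 4$.) A single long jump is \emph{not} controlled by the product of the numbers of jump- and drop-indices, so pairing jumps with drops ``optimally'' cannot work as stated. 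The statement you want is true, but it needs a different intermediate quantity: either the threshold counts $N_j=\#\{i\le j:\si(i)>j\}$ bounded by $\min(j,\,r-j,\,s,\,r-s)$ (whose sum over $j$ is exactly $s(r-s)$), or the one-sided extremal bounds $h\le a(r-a)$ with $a=c_<(\si)\le s$ and $h\le b(r-b)$ with $b=c_>(\si)\le r-s$, whose minimum is $\le s(r-s)$.

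The paper takes a shorter route that you should compare with: it first invokes Lemma \ref{lem:derangement} to conclude that the permutation $\si$ giving the leading term is a derangement, so that the drop-sum has \emph{exactly} $s$ terms and the jump-sum \emph{exactly} $r-s$ terms. Then each sum is bounded by a rearrangement-type extremal estimate: the $s$ drop-indices are distinct so their sum is at most $r+(r-1)+\cdots+(r-s+1)$, and their images are distinct so their sum is at least $1+2+\cdots+s$, giving drop-sum $\le s(r-s)$; symmetrically for the jumps. This avoids having to bound \emph{all} nonzero terms (your more ambitious goal) and sidesteps the pairing issue entirely. To repair your proof with minimal change, either adopt the derangement reduction and the paper's extremal estimate, or replace your false product bound by one of the two correct bounds indicated above.
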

\begin{proof}
First we prove that $\height(\si)\ge 2s(r-s)$.
Let $\tau = (1\; 2\; \cdots\; r)^s$. We show that $\height(\tau)=2s(r-s)$.
Since
\[\tau(i)=\begin{cases}i+s,& i+s\le r\\ i+s-r,&i+s>r\end{cases}\]
we have by definition of $\height(\tau)$
\[\height(\tau) = \sum_{i=1}^{r-s} (i+s-i) + \sum_{i=r-s+1}^r (i-(i+s-r)) = 2s(r-s).\]
Since \eqref{eq:leading} is the leading term of $d_{rs}$,
we in particular have $\height(\si)\ge \height(\tau)=2s(r-s)$
by definition of total degree of a monomial \eqref{eq:totalDegree}.

It remains to show that $\height(\si)\le 2s(r-s)$. By Lemma \ref{lem:derangement},
$\si$ is a derangement. Thus
\[\height(\si)=\sum_{i=1}^r |\si(i)-i| = \sum_{i:\, \si(i)<i}(i-\si(i)) + \sum_{i:\,\si(i)>i} (\si(i)-i),\]
where the first sum has $s$ terms and the second has $r-s$ terms. Clearly we have the estimate
\begin{multline*}
\sum_{i:\, \si(i)<i}(i-\si(i)) + \sum_{i:\,\si(i)>i} (\si(i)-i) \\ \le
(r+(r-1)+\cdots + (r-s+1))-(1+2+\cdots+s) \\
+ (r+ (r-1)+\cdots (s+1)) - (1+2+\cdots +(r-s)) = 2s(r-s).
\end{multline*}
This proves the claim.
\end{proof}

We are now ready to prove Theorem \ref{thm:leadingTerm}.

\begin{proof}[Proof of Theorem \ref{thm:leadingTerm}]
The case $r=s$ is trivial: By \eqref{eq:drs}, $d_{rr}=\la\cdot t_{11}^{(1)}\cdots t_{rr}^{(1)}$, where $\la\in\C^\times$. Thus $d_{rr}$ has only one term,
corresponding to the identity permutation $(1)$. Thus the conjecture holds in this case because $(1\; 2\; \cdots \; r)^r = (1)$.
So we may assume $s<r$.

Let $\si\in S_r$ be the permutation which gives rise to the
leading term of $d_{rs}$. That is,
\[\lt(d_{rs}) = \la t_{\si(1)1}^{(k_1)}t_{\si(2)2}^{(k_2)}\cdots t_{\si(r)r}^{(k_r)}\]
for some nonzero $\la\in\C$ and some $k\in\{0,1\}^r$ with $\sum_i k_i=s$.
By Lemma \ref{lem:derangement}, $\si$ is a derangement. In particular, $k$ is uniquely determined:
$k_i=0$ iff $\si(i)>i$ and $k_i=1$ iff $\si(i)<i$. Moreover, since $\si$ is a derangement, Lemma \ref{lem:nonzero} implies that $s$ equals the number of anti-exceedances for $\sigma$:
\begin{equation}\label{eq:s}
s=\#\{i\in\iv{1}{r}\mid \si(i)<i\}.
\end{equation}
We will now show that
\begin{equation}\label{eq:perm_rs}
\si^{-1}(r)=r-s.
\end{equation}
This is equivalent to that $t_{r,r-s}^{(0)}$ occurs in $\lt(d_{rs})$.
By \eqref{eq:identification} and that the $K_i$ don't contribute to the total
degree, we have $d(t_{r,r-s}^{(0)})=d(F_{\be_{r-s,r}})$.
To show \eqref{eq:perm_rs}, note that
 $t_{r,r-s}^{(0)}$ occurs in the monomial corresponding to $\tau=(1\; 2\;\cdots\; r)^s$.
Thus it is enough to prove that if $t_{ji}^{(0)}$ occurs in the leading
monomial of $d_{rs}$ then $\be_{ij}\le \be_{r-s,r}$.

Suppose the opposite is true, i.e. that $\si^{-1}(j_0)=i_0\in \iv{r-s+1}{j_0-1}$ for some $j_0$ with $i_0<j_0\le r$.
We show that this leads to a contradiction in
the height of $\si$. We have
\begin{equation}\label{eq:sum0}
\height(\si)=\sum_{i=1}^r |\si(i)-i| = \sum_{i:\, \si(i)<i} (i-\si(i)) + \sum_{i:\, \si(i)>i} (\si(i)-i).
\end{equation}
The first sum has $s$ elements, by \eqref{eq:s}, and the second one has $r-s$ terms, since $\si$ is
a derangement. Since $\si(i_0)=j_0>i_0$, we may estimate the first sum from above by assuming that
$i$ runs through the $s$ largest elements of $\iv{1}{r}\setminus\{i_0\}$,
and $\si(i)$ just runs through the $s$ smallest elements of $\iv{1}{r}$. That is,
\begin{multline}\label{eq:sum1}
\sum_{i: \si(i)<i} (i-\si(i)) \le (r+(r-1)+\cdots + (r-s)-i_0) - (1+2+\cdots +s) \\= r-i_0 + s(r-s-1).
\end{multline}
On the other hand, $i_0$ does belong to the summation range of the other sum and therefore
\begin{multline}\label{eq:sum2}
\sum_{i:\, \si(i)>i} (\si(i)-i) \le (r+(r-1)+\cdots+(s+1)) - (1+2+\cdots + (r-s-1)+i_0) \\= (r-s-1)s+r-i_0,
\end{multline}
i.e. the sum of the $r-s$ largest elements of $\iv{1}{r}$ minus the smallest sum of $r-s$ elements
of $\iv{1}{r}$ requiring that one of them is $i_0$.
Combining \eqref{eq:sum0}-\eqref{eq:sum2} we obtain
\begin{equation}
\height(\si)\le 2(r-s-i_0) + 2s(r-s)<2s(r-s)
\end{equation}
since $i_0>r-s$ by assumption. This contradicts Lemma \ref{lem:height} and finishes the proof of \eqref{eq:perm_rs}.

Then, since $\be_{r-s-1,r-1}$ is the largest positive root of the form $\be_{r-s-1,j}$ where $j<r$,
$\be_{r-s-2,r-2}$ is the largest positive root of the form $\be_{r-s-2,j}$ with $j<r-1$, and so on,
we conclude that the leading term of $d_{rs}$ must have the form
\[\lambda\cdot t_{1+s,1}^{(0)}t_{2+s,2}^{(0)}\cdots t_{r,r-s}^{(0)} \cdot t_{\si(r-s+1),r-s+1}^{(k_1)}\cdots t_{\si(r)r}^{(k_s)}.\]
But $\sum k_i=s$ which forces $k_i=1$ for $i\in\iv{1}{s}$. So $\si(i)<i$ for $i\in\iv{r-s+1}{r}$.
Since $d(t_{ij}^{(1)}) = d(E_{\be_{ij}})$ for $i<j$ and by definition \eqref{eq:totalDegree} of the total degree,
the $E_{\be}$ are ordered in \emph{reverse} with respect to the order of the positive roots $\be$, we
are led to the question: What is the smallest possible root $\be_{ij}$ ($i<j$) which may still occur in the monomial?

We know that $\{\si(r-s+1),\si(r-s+2),\ldots,\si(r)\}=\{1,2,\ldots,s\}$. Thus, the smallest root we can get is
$\be_{1,r-s+1}$, obtained iff $\si(r-s+1)=1$. But this happens for the permutation $\tau=(1\; 2\;\cdots\; r)^s$.
So, to have any chance of getting a larger monomial we must continue. But at each step we see
that the smallest possible root is $\be_{i,r-s+i}$ for $i=1,2,\ldots,s$. This proves that
$(1\;2\;\cdots\;r)^s$ indeed is the permutation that gives the leading term of $d_{rs}$.
\end{proof}

For $a,b\in \iv{1}{N}$, $a\neq b$, and $u\in U_q$, let $\deg_{ab}(u)\in\Z_{\ge 0}$ denote the component of the De Concini-Kac filtration degree $d(u)\in(\Z_{\ge 0})^{2M+1}$ corresponding to the root $\beta_{ab}=\ep_a-\ep_b$.
The following result describes which positive roots that occur in the leading term of $d_{rs}$.
\begin{cor}\label{cor:root-degree}
If $1\le b<a\le N$ and $1\le s<r\le N$, then
\[\deg_{ba}(\lt(d_{rs}))=
 \begin{cases}
  1,& \text{$a-b=r-s$ and $a\le r$} \\
  0,& \text{otherwise}
 \end{cases}
\]
\end{cor}
\begin{proof}
By Theorem \ref{thm:leadingTerm}, $\lt(d_{rs})$ is (up to multiplication by some $K_i$ and a scalar) a product of distinct root vectors, and the positive root vector $E_\beta$, $\beta=\beta_{ba}$, occurs in $\lt(d_{rs})$ if and only if $(b,a)\in\{(1,r-s+1),(2,r-s+2),\ldots,(s,r)\}$ which is equivalent to $a-b=r-s$ and $a\le r$.
\end{proof}

\begin{cor}\label{cor:dab_lt} If $1\le b<a\le N$, $1\le s<r\le N$, then
\[\deg_{ba}\big(\lt(\prod_{1\le s<r\le N}d_{rs}^{k_{rs}})\big) = \sum_{r=a}^N k_{r,r-(a-b)}.\]
\end{cor}

\begin{rem}
Define
\begin{equation}
X(r,s) = t_{sr}^{(1)}
\end{equation}
for each $1\le s\le r\le N$. Then, by Theorem \ref{thm:leadingTerm}, $X(r,s)$ occurs in the leading term
of $d_{rs}$, however it sometimes \emph{does} occur in the leading term of some other $d_{ab}$, $(a,b)\neq (r,s)$.
Thus one cannot use the technique from \cite{FMO} to prove that $U_q(\mathfrak{gl}_n)$ is a Galois order. In fact we were not able to generalize our approach in any way to make it work.
\end{rem}


\end{document}